%
%  the filtered ogus realization of Laumon 1-motives final version
%
%  Created by Nicola Mazzari on 2019-03-15.
%	
%  
%
\documentclass[a4paper]{amsart}
\usepackage[utf8x,utf8]{inputenc}
\usepackage{amsfonts,amsmath,amssymb,amsthm}
\usepackage{amscd}
\usepackage{mathrsfs}
\usepackage{ucs,bbm} 
\usepackage{colonequals}
\usepackage[all]{xy}
\usepackage{verbatim}
\usepackage[backref=page]{hyperref}

\hypersetup{
     colorlinks   = true,
     citecolor    = red,
     linkcolor = blue,
     urlcolor = purple
}

\usepackage{enumerate}
\usepackage{xcolor}

\mathchardef\dash="2D

%----- STANDARD NOTATIONS
\newcommand{\Q}{\mathbb{Q}}
\newcommand{\QQ}{\mathbb{Q}}

\newcommand{\ZZ}{\mathbb{Z}}

\newcommand{\Mod}{\mathbf{Mod}}
\newcommand{\Map}{\mathbf{Map}}
\newcommand{\GG}{\mathbb{G}}

\DeclareMathOperator{\Gal}{Gal}

\DeclareMathOperator{\Hom}{Hom}

\DeclareMathOperator{\Lie}{Lie}

\DeclareMathOperator{\Ext}{Ext}
\DeclareMathOperator{\Ab}{Ab}

%-----------Mathcal e mathfrak

\newcommand{\calM}{\mathcal{M}}

\newcommand{\calO}{\mathcal{O}}

%---------SIMBOLI-NOTAZIONI

\newcommand{\crys}{\mathrm{crys}}

\newcommand{\dR}{\mathrm{dR}}

 % cohomology

\newcommand{\FOg}{\mathbf{FOg}}% filtered ogus
\newcommand{\MFOg}{\mathbf{MFOg}}% filtered ogus
% cohomological Nori motives
 % voevodsky motives
% ogus
\newcommand{\MF}{\mathbf{MF}}
\newcommand{\Fil}{\mathrm{Fil}}
%\newcommand{\Fdash}{{\operatorname{F-}}}
%mathcal

%\newcommand{\calO}{\mathcal{O}}

%\newcommand{\Ab}{\mathfrak{Ab}}

%\renewcommand{\k}{\textit{\textbf{k}}}
%\newcommand{\kk}{\bar{\k}}
%\newcommand{\closed}{\hookrightarrow{\hspace{-1ex}\ }}

%-------------COMANDI

\newcommand{\ox}{\otimes}

\newcommand{\et}{{\rm \acute{e}t}}
\newcommand{\ad}{{\rm ad}}

%\newcommand{\calM}{\mathcal{M}}

%
% Running Headers and footers
\usepackage{fancyhdr}

\newtheorem{exo}{Exercise}[section]
\newtheorem{thr}[exo]{Theorem}

\newtheorem{prp}[exo]{Proposition}
\newtheorem{crl}[exo]{Corollary}

\theoremstyle{definition}\newtheorem{dfn}[exo]{Definition}
\theoremstyle{remark}\newtheorem{rmk}[exo]{Remark}
\theoremstyle{remark}
\theoremstyle{definition}

\title{The Fontaine--Ogus realisation of Laumon 1-motives}
\author{Nicola Mazzari}
\address{Univ. Bordeaux, CNRS, Bordeaux INP, IMB, UMR 5251,  F-33400, Talence, France}
\email{nicola.mazzari@math.u-bordeaux.fr}
\keywords{Laumon 1-motives, Ogus realisation}
\subjclass[2010]{14F30, 14L15, 14F40}
\date{\today}

\begin{document}
\begin{abstract}
    We construct the (filtered) Ogus realisation of Laumon $1$-motives over a number field. This realisation extends the functor defined on Deligne $1$-motives by Andreatta, Barbieri-Viale and Bertapelle. 
\end{abstract}
\maketitle
\section{Introduction}

Let $K$ be a number field and let $\FOg(K)$ be the $\QQ$-linear abelian category of filtered Ogus structures over $K$ defined in \cite[\S~1.3]{AndBarBer:17a}. The main result of \cite{AndBarBer:17a} is  the existence a realisation functor $T_{\FOg}:\calM_{1,\QQ}(K) \to \FOg(K)$ from the category of Deligne 1-motives over $K$ up to isogeny. Moreover this functor is fully faithful.

In the present article we extend the above realisation $T_\FOg$ to the category $\calM_{1,\QQ}^a(K)$ of Laumon 1-motives over $K$ up to isogeny.

First we prove that $T_\FOg$ factors through a finer category $\MFOg(K)$ obtained by enriching $\FOg(K)$ with the Hodge filtration. We also require the admissibility condition used by Fontaine   defining the category $\MF_K^\ad$ of admissible filtered $\phi$-modules \cite{Fon:94c}. For this reason we  call $\MFOg(K)$ the category of Fontaine--Ogus structures over $K$. More precisely $T_\FOg$ factors through the full subcategory $\MFOg_1(K)\subset \MFOg(K)$ of Fontaine--Ogus modules of level $\le 1$ (see \S~\ref{ssec:MFOg}).

Then we define the category $\MFOg^a_{(1)}(K)$, which is analogous to the category considered in \cite{Bar:07a}, containing $\MFOg_{(1)}(K)$ as a full subcategory. Now we can state the main result of this paper.
\begin{thr}
	There exists a is fully faithful realisation functor
	\[
		T^a:\calM_{1,\QQ}^a(K)\to \MFOg^a_{(1)}(K)
	\]
	extending $T_{\FOg}$.
\end{thr}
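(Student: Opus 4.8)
The plan is to construct $T^a$ by hand, in the spirit of the construction of $T_\FOg$ in \cite{AndBarBer:17a}, and then to establish full faithfulness by d\'evissage along the weight filtration, reducing to the already known case of Deligne $1$-motives together with a direct analysis of the two new pure objects $\GG_a$ and $\widehat{\GG}_a$. Concretely, let $M=[F\xrightarrow{u}G]$ be a Laumon $1$-motive over $K$, so $F$ is a formal $K$-group (extension of a lattice by a connected formal group) and $G$ is a connected commutative algebraic group (extension of a semiabelian variety by a vector group). To $M$ I attach: the de Rham realisation $T_{\dR}(M):=\Lie M^\natural$, where $M^\natural=[F\to G^\natural]$ is the universal vectorial extension of $M$ (Laumon), together with the two-step Hodge filtration read off from $\omega$ of the Cartier dual and from the additive part of $G$; for each embedding $\sigma\colon K\hookrightarrow\CC$, the Betti realisation $T_\sigma(M)$ of $M_\sigma$ as a formal mixed Hodge structure of level $\le 1$ as in \cite{Bar:07a}, with the period isomorphism $T_\sigma(M)\otimes\CC\isomto T_{\dR}(M)\otimes_{K,\sigma}\CC$; and, for almost all finite places $v$ of $K$, the crystalline realisation of the reduction of $M$ at $v$ with its Frobenius and the comparison with $T_{\dR}(M)\otimes K_v$. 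Bundling these data gives a candidate object $T^a(M)$, functorial in $M$ by functoriality of each ingredient; on the subcategory $\calM_{1,\QQ}(K)$, where $F$ has no connected part and $G$ no additive part, these are exactly the data defining $T_\FOg$, so $T^a$ extends $T_\FOg$ along $\calM_{1,\QQ}(K)\subset\calM_{1,\QQ}^a(K)$ and $\MFOg_{(1)}(K)\subset\MFOg^a_{(1)}(K)$.

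Next one checks that $T^a(M)$ is an object of $\MFOg^a_{(1)}(K)$. The weights of a Laumon $1$-motive lie in $\{0,-1,-2\}$, and since the universal vectorial extension construction is unchanged from the Deligne case the Hodge filtration is two-step, so $T^a(M)$ is of level $\le 1$. The compatibility of the period and crystalline comparisons is inherited from \cite{AndBarBer:17a} on the Deligne part and is checked directly on the additive and formal pure pieces, where the crystalline datum is the \emph{formal} one --- which is precisely why one enlarges $\MFOg_{(1)}(K)$ to $\MFOg^a_{(1)}(K)$, as in \cite{Bar:07a}. Finally, Fontaine's admissibility condition holds for Deligne $1$-motives by \cite{AndBarBer:17a} and, by inspection, for the new pieces; as weakly admissible filtered $\phi$-modules are stable under extensions and subquotients, the general case follows by d\'evissage along the weight filtration.

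For full faithfulness, both categories carry weight filtrations and $T^a$ is exact and strictly compatible with them; by the long exact sequences for $\Hom$ and $\Ext^1$ and induction on the length of the weight filtration, it suffices to prove that $T^a$ induces isomorphisms on $\Hom$, and injections on $\Ext^1$, between \emph{pure} Laumon $1$-motives up to isogeny. Up to isogeny the pure objects are iterated extensions of lattices (weight $0$), tori (weight $-2$), abelian varieties, $\GG_a$ and $\widehat{\GG}_a$ (weight $-1$) --- indeed the additive and formal part of a pure weight $-1$ object is, up to isogeny, a representation of the $A_2$ quiver over $K$, generated under extensions by $\GG_a$ and $\widehat{\GG}_a$. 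For $\Hom$, after Cartier duality (under which $\GG_a\leftrightarrow\widehat{\GG}_a$, $\GGm\leftrightarrow\ZZ$, $A\leftrightarrow A^\vee$, and with which $T^a$ is compatible) the only non-vanishing cases are between pure objects of the same weight, where one invokes: the statements for lattices and abelian varieties from \cite{AndBarBer:17a}; the triviality that $K$-linear maps coincide with morphisms of the associated structureless Fontaine--Ogus modules, giving $\Hom(\GG_a,\GG_a)=\Hom(\widehat{\GG}_a,\widehat{\GG}_a)=K$; and the vanishing of $\Hom$ between abelian and additive/formal summands, which is visible on Hodge filtrations and Frobenius. This leaves the comparison of $\Ext^1$-groups between pure objects as the remaining content.

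The main obstacle is therefore the $\Ext^1$ comparison for the genuinely new pairs, such as $\Ext^1(A,\GG_a)$, $\Ext^1(\GG_a,A)$, $\Ext^1(\I,\GG_a)$, $\Ext^1(\GGm,\GG_a)$, $\Ext^1(\GG_a,\widehat{\GG}_a)$ and those obtained from them by Cartier duality. On the motivic side these admit explicit descriptions (Laumon; Barbieri-Viale) in terms of $\Lie$ of the relevant algebraic groups and of $H^1(-,\calO)$; on the target side one computes the Yoneda $\Ext^1$ in $\MFOg^a_{(1)}(K)$ by the same kind of de Rham-plus-comparison calculation as in the Deligne case of \cite{AndBarBer:17a}, the admissibility condition cutting out exactly the image of the motivic $\Ext^1$. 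Matching the two descriptions --- and, upstream of it, pinning down the definition of $\MFOg^a_{(1)}(K)$ and of the Hodge filtration on the additive part so that the comparison is an equality and not merely an inclusion --- is where the real work lies; the rest is d\'evissage and bookkeeping.
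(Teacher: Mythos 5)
There is a genuine gap, on two levels. First, your construction does not land in the paper's target category. An object of $\MFOg^a_{(1)}(K)$ is a Fontaine--Ogus structure $(T,\Fil^\bullet)$ \emph{of the Deligne part only}, together with bare linear-algebra data $(\alpha,\beta,\delta,\gamma)$ subject to a cartesian-square condition; there is no Betti/period datum in $\FOg$ at all, and no Frobenius or admissibility is imposed on the additive and formal pieces. Your candidate $T^a(M)$ (Lie algebra of the Laumon universal extension, formal Hodge structures at infinite places, a ``formal'' crystalline datum on the new pieces) never produces the data $\alpha=\alpha_M:U(M)\to V$, $\beta=du_\times$, and above all the section $\gamma_M$ of Remark~\ref{rmk:gamma} with its cartesian square --- and that section is precisely what makes fullness possible: it is what lets a morphism in the target determine the map $a:U(M)\to U(M')$ from $\eta$ and $b$. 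Second, your full-faithfulness argument is openly incomplete: you reduce, by weight d\'evissage, to $\Hom$-bijectivity and $\Ext^1$-injectivity between pure objects and then state that matching the motivic and target-side $\Ext^1$-groups (and even pinning down the target category so that this matching holds) ``is where the real work lies.'' That deferred step is exactly the content of the theorem; as written, nothing rules out the target $\Ext^1$ being strictly larger, i.e.\ the functor failing to be full. (Smaller slips: weakly admissible filtered $\phi$-modules are stable under extensions, kernels and cokernels, but not under arbitrary subquotients; and there is no natural admissible $\phi$-module structure to put on $\Lie\GG_a$ or $\widehat{\GG}_a$ in this setting, which is why the paper keeps them outside the filtered $\phi$-module part.)

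For comparison, the paper's proof avoids all $\Ext$ computations. It first shows (Propositions~\ref{prp:??} and~\ref{prp:times}) that $\calM_1^a$ is an iterated fibre product $\bigl(\calM_1\times_{\Mod_K}\Map_K\bigr)\times_{\Mod_K}\Map_K$, so a Laumon $1$-motive is equivalent to the triple $(M_\et,\alpha_M,du_\times)$; it then designs $\MFOg^a_{(1)}$, including $\gamma$ and the cartesian square, to mirror this structure. Fullness is then formal: a morphism of target objects gives $\eta$ on the Deligne part, which by Proposition~\ref{prp:tmfog} comes from $\xi_\et$; the pair $(b,c)$ with $c=dg_\times$ lifts $\xi_\et$ to $\xi_\times$ by Proposition~\ref{prp:times}; and $a$ is forced by $\eta$ and $b$ via the cartesian square, so Proposition~\ref{prp:??} produces $\xi:M\to M'$. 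If you want to salvage your route, you would have to (i) redefine your realisation so that it actually outputs $(T_{\MFOg}(M_\et),\alpha_M,du_\times,\gamma_M)$, and (ii) carry out in full the pure-object $\Hom$ and $\Ext^1$ comparisons in $\MFOg^a_{(1)}$ --- at which point the fibre-product argument is both shorter and already available.
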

	Indeed it is possible to extend $T_\FOg$ to Laumon 1-motives just adding some extra vector spaces to the definition of $\FOg$. The introduction of $\MFOg(K)$ and $\MFOg^a(K)$ is necessary in  order to preserve fully faithfulness.
	
	The definition of $\MFOg^a_{(1)}(K)$ and the full faithfulness depend on   results about the category of Laumon 1-motives in \S~\ref{ssec:devi}: we express the latter  category as an iterated fibre product of categories. There is another important ingredient in the definition of $\MFOg^a_{(1)}(K)$, which is necessary to make the functor full: it depends on the \emph{section} $\gamma_M$ described in Remark~\ref{rmk:gamma}.
	
	 Further, in Corollary of \S~\ref{ssec:devi} we find a shorter proof of the fact that  the cohomological dimension of Laumon 1-motives is $1$ (cf. \cite{Maz:10a}).
\subsubsection*{Acknowledgements} The author is grateful to Alessandra Bertapelle for all the support and the mathematical insights. 
\section{1-motives}
	\subsection{Laumon 1-motives}
	Let $K$ be a (fixed) field of characteristic zero (later it will be a number field).
	Let ${\sf Ab}_K$ be the category of  sheaves of abelian groups on the category of affine $K$-schemes endowed with the fppf topology. We will consider both the category of commutative  $K$-group schemes and that   of formal $K$-group schemes   as  full subcategories of ${\sf Ab}_K$.

		A \emph{Laumon 1-motive} over $K$ (or an effective free 1-motive over $K$, cf. \cite[1.4.1]{BarBer:09a}) is the data of
\begin{enumerate}
	\item A (commutative) formal group $F$ over $K$, such that $\Lie F$ is a finite dimensional $K$-vector space and $F(\bar K)=\lim_{[K':K]<\infty}F(K')$ is a finitely generated and torsion-free $\Gal(\bar K/K)$-module. 
	\item A connected commutative algebraic group scheme ${G}$ over $K$.
	\item A morphism $u:{F}\to {G}$ in the category $\sf Ab_K$.
\end{enumerate}
	\begin{rmk}
		\begin{enumerate}
			\item It is known that any formal $K$-group ${F}$ splits canonically as product ${F}^\circ \times {F}_{\et} $ where ${F}^\circ$ is the identity component of ${F}$ and is a connected formal $K$-group, and ${F}_{\et} = {F} /{F}^\circ$ is \'etale. Moreover, ${F}_{\et}$ admits a maximal sub-group scheme ${F}_{\rm tor}$ , \'etale and finite, such that the quotient ${F}_{\et} /{F}_{\rm tor} = {F}_{\rm fr}$ is étale-locally constant of the type $\ZZ^r$. One says that ${F}$ is torsion-free if ${F}_{\rm tor} = 0$.
			\item By a theorem of Chevalley any connected algebraic group scheme ${G}$ is the extension of an abelian variety ${A}$ by a linear $K$-group scheme ${L}$ that is product of its maximal sub-torus ${T}$ with a vector $K$-group scheme ${V}$. We denote by $G_\times=G/V$ the semi-abelian quotient of $G$. (See \cite{Dem:07a} 
	for more details  on algebraic and formal groups)
		\end{enumerate}
	\end{rmk}
\subsection{Morphisms} We can consider a Laumon 1-motive (over $K$) $M=[u:{F}\to {G}]$ as a complex of sheaves in ${\sf Ab}_K$ concentrated in degree $0,1$. 		A \emph{morphism} of Laumon 1-motives is a commutative square in the category $\Ab_K$. We denote by $\calM^{a}_1=\calM^{a}_{1}(K)$ the category of Laumon $K$-1-motives, i.e. the full sub-category of $C^b(\Ab_K)$ whose objects are Laumon 1-motives.

	 We  define the  category $\calM^{a}_{1,\QQ}(K)={\calM}^{a}_1(K)\ox\Q$ of  Laumon 1-motives up to isogenies by replacing the Hom groups with $\Hom_{{\calM}^{a}_1(K)}(M,M')\ox_\ZZ \QQ $. The category of Laumon 1-motives up to isogenies is  abelian.
	 
	 The category of Deligne 1-motives over $K$ is the full sub-category $\calM_1(K)$ of $\calM_1^{a}(K)$ whose objects are $M=[u:{F}\to {G}]$ such that ${F}^\circ=0$ and ${G}$  is semi-abelian (cf. \cite[\S 10.1.2]{Del:74b}). We can also define the \emph{up to isogeny} version $\calM_{1,\QQ}(K)={\calM}_1(K)\ox\Q$ which is an abelian subcategory of $\calM^{a}_{1,\QQ}(K)$. 
	 \subsection{Devissage of Laumon 1-motives}	\label{ssec:devi} 
	In this section we note $\calM_1^{a}$ for $\calM_1^{a}(K)$. 
	 
	\begin{rmk}
		 Notation as in the previous section. Given a Laumon 1-motive $M=[u:F\to G]$ there is an exact sequence
	\[
		0\to V\to M\to M_\times\to 0
		\]
		where $M_\times=[u_\times:F\to G_\times]$ is obtained by composition with the canonical projection $G\to G_\times =G/V$.   Since $M_\times$ admits a universal vector extension $M_\times^\natural$, the latter  class of the previous extension is determined by a map
		\[
			\alpha_M:U(M)\to V\qquad U(M):=\Ext^1(M_\times,\GG_a)^*\ .
		\]
		
		Let further $M_\et=[u_\et:F_\et\to G_\times]$ be the Deligne 1-motive obtained by restricting $u_\times$ to $F_\et$
	\end{rmk}
We denote by $\Map_K$ the category whose objects are $f:Z_0\to Z_1$ morphisms of finite dimensional $K$-vector spaces. There are two exact functors
\[
	s,t:\Map_K\to \Mod_K,\quad s(f)=Z_0\ , \ t(f)=Z_1 \ .
\]
\begin{prp}\label{prp:??}
	Let $\calM_1^{\times}$ be the full subcategory of Laumon 1-motives whose object are of the form $M=M_\times$.
	
	The association
	\[
		M\to (M_\times, \alpha_M) 
	\]
	induces an equivalence of categories between the category of Laumon 1-motives  and the fibre product category 
	\[
		 \calM_1^\times\times_{\Mod_K}\Map_{K}
	\]
	taken we respect to the following diagram
	\[
		\calM_1^\times\xrightarrow{U} \Mod_K\xleftarrow{s} \Map_K \ .
	\]
	(See \cite[Tag 0030]{Stacks-Project-Authors:2018aa} for details on the fibre product of categories.)
\end{prp}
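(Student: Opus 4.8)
The plan is to write $\calC$ for the fibre product category $\calM_1^\times\times_{\Mod_K}\Map_K$ and to produce an explicit quasi-inverse to the stated functor. Unwinding \cite[Tag 0030]{Stacks-Project-Authors:2018aa}, an object of $\calC$ is a triple $(N,(f\colon Z_0\to Z_1),\gamma)$ with $N\in\calM_1^\times$, $f$ an object of $\Map_K$, and $\gamma\colon U(N)\isomto Z_0=s(f)$; a morphism $(N,f,\gamma)\to(N',f',\gamma')$ is a pair $\bigl(\psi\colon N\to N',\ (\chi_0,\chi_1)\colon f\to f'\bigr)$ with $\gamma'\circ U(\psi)=\chi_0\circ\gamma$. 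Since $U(M)=\Ext^1(M_\times,\GG_a)^*$ depends only on $M_\times$, one may define $\Phi\colon\calM_1^a\to\calC$ on objects by $\Phi(M)=(M_\times,(\alpha_M\colon U(M)\to V),\id_{U(M)})$ and on a morphism $\phi\colon M\to M'$ by the induced $\phi_\times\colon M_\times\to M'_\times$ together with the restriction $\phi|_V\colon V\to V'$ to vector parts. The only point to verify here is that $\alpha_{M'}\circ U(\phi_\times)=(\phi|_V)\circ\alpha_M$, i.e. the naturality of $M\mapsto\alpha_M$; this follows from the functoriality of the universal vector extension $M_\times\mapsto M_\times^\natural$ and of the canonical extension $0\to U(M)\otimes_K\GG_a\to M_\times^\natural\to M_\times\to 0$ that defines $\alpha_M$ in the Remark preceding the statement.

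I would then check that $\Phi$ is fully faithful. Because $\gamma=\gamma'=\id$ in $\Phi(M),\Phi(M')$, a morphism $\Phi(M)\to\Phi(M')$ in $\calC$ is the same as a pair $(\psi\colon M_\times\to M'_\times,\ \chi\colon V\to V')$ satisfying the single equation $\alpha_{M'}\circ U(\psi)=\chi\circ\alpha_M$ in $\Hom_K(U(M),V')$. Under the identification $\Hom_K(U(M),W)\iso\Ext^1(M_\times,W)$ for a vector group $W$ — the identification by which $\alpha_M$ classifies the sequence $0\to V\to M\to M_\times\to 0$ — this equation says precisely that the extensions $\psi^*[M']$ and $\chi_*[M]$ of $M_\times$ by $V'$ coincide. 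Choosing an isomorphism between them and composing the tautological maps $M\to\chi_*M\isomto\psi^*M'\to M'$ yields a morphism of Laumon $1$-motives over $(\psi,\chi)$, giving surjectivity of $\Hom_{\calM_1^a}(M,M')\to\Hom_\calC(\Phi M,\Phi M')$. Injectivity is easier: the difference of two lifts of $(\psi,\chi)$ is a morphism $M\to M'$ vanishing on $V$ and inducing $0$ on $M_\times$, hence factoring through $\Hom_{\calM_1^a}(M_\times,V')=\Hom_{\Ab_K}(G_\times,V')=0$, where $G_\times$ is the algebraic part of $M_\times$ — a semiabelian variety, since $M_\times=(M_\times)_\times$ has trivial vector part — and there are no nonzero homomorphisms from a semiabelian variety to a vector group.

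For essential surjectivity, given $(N,(f\colon Z_0\to Z_1),\gamma)\in\calC$ set $a:=f\circ\gamma\in\Hom_K(U(N),Z_1)\iso\Ext^1(N,Z_1)$ and let $M$ be the corresponding extension of $N$ by the vector group $Z_1$. Then $M$ is a Laumon $1$-motive (its formal part is that of $N$, and its algebraic part is connected), and since $N$ has trivial vector part the maximal vector subgroup of the algebraic part of $M$ is exactly $Z_1$, so $M_\times=N$ and, by the Remark, $\alpha_M=a$. Hence $\Phi(M)=(N,(a\colon U(N)\to Z_1),\id)$, and the pair $(\id_N,(\gamma,\id_{Z_1}))$ is an isomorphism $\Phi(M)\isomto(N,f,\gamma)$ in $\calC$. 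Therefore $\Phi$ is an equivalence of categories.

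The step I expect to require the most care is the well-definedness of $\Phi$ on morphisms and the construction of the lift in the full-faithfulness argument: both reduce to the functoriality of the universal vector extension and to the bijection between extensions of $M_\times$ by a vector group $W$ and $K$-linear maps $U(M)\to W$ — that is, exactly the material recorded in the Remark preceding the statement. Once that is granted, what remains are the two formal computations above together with the vanishing $\Hom(\text{semiabelian},\text{vector group})=0$.
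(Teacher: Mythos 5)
Your proposal is correct and follows essentially the same route as the paper: the crucial fullness step rests, as in the paper's proof, on the classification of extensions of $M_\times$ by vector groups through the universal vector extension (equivalently the natural bijection $\Hom_K(U(M),W)\cong\Ext^1(M_\times,W)$), and your comparison of classes $\chi_*[M]=\psi^*[M']$ followed by composing $M\to\chi_*M\cong\psi^*M'\to M'$ is just a repackaging of the paper's construction of $\xi$ by pushing out along the map of universal vector extensions $\xi_\times^\natural$. You additionally spell out the faithfulness (via $\Hom(G_\times,\GG_a)=0$ for $G_\times$ semi-abelian) and essential surjectivity arguments that the paper declares straightforward, and these check out.
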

\begin{proof}
	Faithfulness and essential surjectivity are straightforward. We only need to show that the functor is full. Consider a morphism $\xi_\times:(M_\times,\alpha_M)\to (M_\times',\alpha_{M'})$, i.e. two diagrams
	\[
		\xymatrix{
		F \ar[d]_{u_\times} \ar[r]^f   & F'\ar[d]_{ {u}_{\times}' }  & & U(M)\ar[d]_{\alpha_M}\ar[r]^a & U(M')\ar[d]_{\alpha_{M'}}\\
		G_\times\ar[r]_{g_\times} &{G}_\times' &  &V \ar[r]_v & V'
	}\ .
	\]
	We have to prove the existence of a map $\xi :M\to M'$  inducing $\xi_\times$. If we denote by $M_\times^\natural$ the universal vector extension of $M_\times$ we have a commutative diagram
	\[
		\xymatrix{
		V\ar[d]_v&\ar[l]_{\alpha_M}U(M)\ar[d]_{a}\ar[r]&M_\times^\natural\ar[d]^{\xi_\times^\natural}\\
		V'&\ar[l]_{\alpha_{M'}}U(M')\ar[r]&{M_\times'}^\natural
		}
	\]
	inducing via push-out a morphism $\xi:M\to M'$ with the expected properties. 
\end{proof}
For $u:F\to G$ we denote $du=\Lie(u)$.
\begin{prp}\label{prp:times}
	The association
	\[
		 M_\times=[u_\times:F\to G_\times]\mapsto (M_\et=[u_\et:F_\et\to G_\times],du_\times:\Lie(F)\to \Lie(G_\times))
	\]
	induces an equivalence of categories between the category $\calM_1^{\times}$ and the fibre product category
	\[
		\calM_1\times_{\Mod_K}\Map_K
	\]
	taken with respect to the following diagram
	\[
		\calM_1\xrightarrow{\ell} \Mod_K\xleftarrow{t} \Map_K \ .
	\]
	where $\ell(M)=\Lie(G)$.
\end{prp}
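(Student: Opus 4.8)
The plan is to follow the proof of Proposition~\ref{prp:??}: well-definedness, faithfulness and essential surjectivity are straightforward, and the only real content is fullness. I will rely on the following structural facts. For a formal $K$-group $F$ occurring in a Laumon $1$-motive the canonical splitting $F=F^\circ\times F_\et$ gives $\Lie F=\Lie F^\circ$, since $\Lie F_\et=0$. As $K$ has characteristic zero, $\Lie$ is an equivalence between the category of connected commutative formal $K$-groups with finite dimensional Lie algebra and the category of finite dimensional $K$-vector spaces (\cite{Dem:07a}); a quasi-inverse sends $W$ to its formal completion $\widehat W$ at the origin, and $\Hom(\widehat W,\widehat{W'})=\Hom_K(W,W')$ because an additive formal power series in characteristic zero is linear. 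Moreover the exponential identifies the formal completion $\widehat{G_\times}$ of the semi-abelian group $G_\times$ along its identity with $\widehat{\Lie G_\times}$, and every morphism of fppf sheaves $F^\circ\to G_\times$ factors through $\widehat{G_\times}$, since each infinitesimal truncation of $F^\circ$ maps into an infinitesimal neighbourhood of the identity of $G_\times$. Combining these facts gives
\[
\Hom_{\Ab_K}(F^\circ,G_\times)=\Hom_K(\Lie F^\circ,\Lie G_\times),
\]
with the isomorphism induced by $\Lie$. Finally $\Hom_{\Ab_K}(F^\circ,F'_\et)=0$ (a homomorphism from a connected formal group to an \'etale one is trivial) and $\Hom_{\Ab_K}(F_\et,{F'}^\circ)=0$ (such a homomorphism vanishes after base change to $\bar K$, where $\widehat{\Lie {F'}^\circ}(\bar K)=0$), so every morphism $f\colon F\to F'$ of formal groups decomposes as $f^\circ\times f_\et$ with respect to the canonical splittings.

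With this, well-definedness amounts to the identity $\ell(M_\et)=\Lie(G_\times)=t(du_\times)$. For essential surjectivity I will take an object $(M_\et=[v\colon H\to S],\,\phi\colon Z_0\to\Lie S)$ of $\calM_1\times_{\Mod_K}\Map_K$ and set $F:=\widehat{Z_0}\times H$ and $G_\times:=S$, letting $u_\times$ be $v$ on $H$ and the morphism $\widehat{Z_0}\to\widehat{\Lie S}=\widehat S\hookrightarrow S$ attached to $\phi$ on $\widehat{Z_0}$; then $M_\times:=[u_\times\colon F\to G_\times]$ lies in $\calM_1^\times$, has $F_\et=H$, and recovers both $M_\et$ and $du_\times=\phi$. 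For faithfulness, if $\xi=(f,g_\times)\colon M_\times\to M'_\times$ is killed by the functor, then $f_\et=0$, $g_\times=0$ and $\Lie f=0$; writing $f=f^\circ\times f_\et$ and using that $\Lie$ is faithful on connected formal groups gives $f^\circ=0$, hence $\xi=0$.

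The heart of the argument is fullness, which is where the identification $\Hom_{\Ab_K}(F^\circ,G_\times)=\Hom_K(\Lie F^\circ,\Lie G_\times)$ is used. Given a morphism $((f_\et,g_\times),\psi)\colon(M_\et,du_\times)\to(M'_\et,du'_\times)$ in the fibre product, write $\psi=(\psi_0\colon\Lie F\to\Lie F',\ \psi_1\colon\Lie G_\times\to\Lie G'_\times)$, so that $du'_\times\circ\psi_0=\psi_1\circ du_\times$; the compatibility defining the fibre product forces $\psi_1=\Lie g_\times$. Let $f^\circ\colon F^\circ\to{F'}^\circ$ be the morphism of connected formal groups with $\Lie f^\circ=\psi_0$ and set $f:=f^\circ\times f_\et\colon F\to F'$, so that $\Lie f=\psi_0$ and $f$ induces $f_\et$ on \'etale quotients. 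It then remains to check $u'_\times\circ f=g_\times\circ u_\times$, and since $F=F^\circ\times F_\et$ it suffices to do so on each factor: on $F_\et$ this is $u'_\et\circ f_\et=g_\times\circ u_\et$, which is part of the given datum; on $F^\circ$ both sides lie in $\Hom_{\Ab_K}(F^\circ,G'_\times)$, hence are determined by their Lie algebra maps $du'_\times\circ\psi_0$ and $\Lie g_\times\circ du_\times$, which coincide by the two relations above. Thus $f$ defines a morphism $M_\times\to M'_\times$ lifting the prescribed one. I expect the main obstacle to be precisely this Lie-detection statement for $\Hom(F^\circ,G_\times)$ — making rigorous that, once the target is semi-abelian, a morphism out of a connected formal group is seen on Lie algebras — the rest being formal bookkeeping about the canonical splitting of $F$.
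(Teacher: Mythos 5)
Your argument is correct, but it takes a genuinely different route from the paper: the paper's entire proof is a one-line citation of \cite[Proposition~1.5.2]{BarBer:09a}, which already records that a Laumon 1-motive with semi-abelian group part is equivalent to the data of its \'etale part together with a linear map $\Lie F\to \Lie G_\times$. What you do instead is reprove the substance of that cited result: the splitting of morphisms $f=f^\circ\times f_\et$ (via $\Hom(F^\circ,F'_\et)=0=\Hom(F_\et,{F'}^\circ)$), and above all the characteristic-zero identification $\Hom_{\Ab_K}(F^\circ,G_\times)\cong\Hom_K(\Lie F^\circ,\Lie G_\times)$, obtained by factoring a homomorphism from a connected formal group through the formal completion $\widehat{G_\times}\cong\widehat{\Lie G_\times}$ and using that additive formal power series over a $\QQ$-algebra are linear; this ``Lie-detection'' fact is exactly what underlies \cite[\S~1.5]{BarBer:09a}, and it is what makes your essential surjectivity construction $F=\widehat{Z_0}\times H$ and your fullness check on the $F^\circ$-factor work. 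The trade-off: the paper's proof is shorter and defers all formal-group input to the literature, while yours is self-contained and makes visible where characteristic zero and the algebraicity of $G_\times$ enter; if you wished to compress it, you could replace your structural facts by citations to \cite{Dem:07a}, or cite \cite[Proposition~1.5.2]{BarBer:09a} outright as the paper does.
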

\begin{proof}
	This is an immediate  corollary of \cite[Proposition~1.5.2]{BarBer:09a}.
\end{proof}
\begin{rmk}
	By the previous propositions a  Laumon 1-motive $M$ is equivalent to the data $(M_\et, \alpha_M,du_\times)$.
\end{rmk}
By the previous devissage we easily get the following result (already proved in a more direct way in \cite{Maz:10a})
\begin{crl}
	The category $\calM_{1,\QQ}^{a}$, of Laumon 1-motives up to isogeny (over $K$), is of cohomological dimension  1.
\end{crl}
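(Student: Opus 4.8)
The plan is to bootstrap the cohomological dimension statement from the two devissage equivalences just established, reducing the claim to the (easy) cohomological dimensions of $\Mod_K$, $\Map_K$ and the category $\calM_{1,\QQ}$ of Deligne 1-motives up to isogeny. First I would recall that a $\QQ$-linear abelian category $\calC$ has cohomological dimension $\le 1$ exactly when $\Ext^i_\calC(X,Y)=0$ for all $i\ge 2$ and all $X,Y$; equivalently, every object has a length-$1$ projective-or-injective-type resolution in the derived sense, but the cleanest criterion here is vanishing of $\Ext^2$. The category $\Mod_K$ is semisimple, hence of cohomological dimension $0$. The category $\Map_K$ (morphisms of finite-dimensional $K$-vector spaces) has cohomological dimension $1$: it is equivalent to the category of finite-dimensional representations of the $A_2$-quiver $\bullet\to\bullet$, which is hereditary, so $\Ext^i$ vanishes for $i\ge 2$.

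Next I would establish a general lemma: if $\calC = \calA \times_{\mathcal D} \mathcal B$ is a $2$-fibre product of $\QQ$-linear abelian categories along exact functors $\calA \xrightarrow{P} \mathcal D \xleftarrow{Q} \mathcal B$, with $\mathcal D$ of cohomological dimension $0$ and $\calA$, $\mathcal B$ of cohomological dimension $\le 1$, then $\calC$ has cohomological dimension $\le 1$. The mechanism is a long exact Mayer--Vietoris-type sequence: for objects $(A,B,\phi)$ and $(A',B',\phi')$ of $\calC$ one has a long exact sequence
\[
\cdots \to \Ext^i_\calC\bigl((A,B,\phi),(A',B',\phi')\bigr) \to \Ext^i_\calA(A,A') \oplus \Ext^i_\calB(B,B') \to \Ext^i_{\mathcal D}(PA,PA') \to \cdots
\]
Since $\Ext^{\ge 1}_{\mathcal D}=0$, this sequence shows $\Ext^i_\calC \cong \Ext^i_\calA \oplus \Ext^i_\calB$ for $i\ge 2$, and both summands vanish there; for $i=1$ it shows $\Ext^1_\calC$ is (at most) an extension of a subobject of $\Hom_{\mathcal D}(PA,PA')$ by a quotient of $\Ext^1_\calA\oplus\Ext^1_\calB$, which is irrelevant for the dimension bound. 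One must check that this long exact sequence is available in our setting; a convenient way is to realise the fibre product via a suitable comma/gluing description and invoke the standard spectral sequence for $\Ext$ in glued abelian categories, or simply to note that when $P$ and $Q$ are exact and (essentially) surjective on objects, a short exact sequence in $\calC$ is exactly a triple of compatible short exact sequences, whence the $\delta$-functor formalism applies.

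Finally I would assemble the pieces. By Proposition~\ref{prp:times}, $\calM_1^\times \otimes\QQ \simeq \calM_{1,\QQ} \times_{\Mod_K} \Map_K$ along $\ell$ and $t$, both exact; since $\calM_{1,\QQ}$ has cohomological dimension $\le 1$ (this is Deligne's case, recorded in the references, e.g.~\cite{AndBarBer:17a}), and $\Mod_K$ has dimension $0$, the lemma gives that $\calM_1^\times\otimes\QQ$ has cohomological dimension $\le 1$. Then, applying Proposition~\ref{prp:??} (again tensored with $\QQ$) and the same lemma with $\calA = \calM_1^\times\otimes\QQ$, $\mathcal B = \Map_K$, $\mathcal D = \Mod_K$ and functors $U$, $s$, we conclude $\calM^a_{1,\QQ} \simeq \calM_1^\times\otimes\QQ \times_{\Mod_K}\Map_K$ has cohomological dimension $\le 1$; it is exactly $1$ since it contains $\GG_m$-type motives with non-trivial $\Ext^1$. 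The main obstacle is the verification of the Mayer--Vietoris long exact sequence for $\Ext$ in a $2$-fibre product of abelian categories — making precise in what generality the gluing/comma-category $\delta$-functor machinery applies, and checking our functors ($U$, $\ell$, $s$, $t$) meet those hypotheses (exactness is clear; one also wants enough surjectivity so that kernels and cokernels in the fibre product are computed componentwise). Everything else is bookkeeping.
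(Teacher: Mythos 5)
Your proposal is correct and follows essentially the same route as the paper: the same long exact sequence of $\Ext$ groups for a fibre product of abelian categories along exact functors, the same dimension facts for $\Mod_K$ and $\Map_K$, successive application of the two devissage propositions to reduce $\Ext^{\ge 2}$ in $\calM^a_{1,\QQ}$ to $\Ext^{\ge 2}$ of Deligne $1$-motives up to isogeny (known to vanish), and the example $\Ext^1(\ZZ[1],\GG_m)=K^*\otimes\QQ$ to show the dimension is exactly $1$. The only differences are cosmetic (quiver-hereditarity argument for $\Map_K$ versus the citation to the literature, and your explicit flagging of the Mayer--Vietoris sequence, which the paper likewise uses without proof).
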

\begin{proof}
	Given a fibre product of abelian categories $P=X\times_S Y$ (along exact functors) there is a long exact sequence of derived Hom
	\[
		\Ext^i_P\to \Ext^i_X\times \Ext^i_Y\to \Ext^i_S\xrightarrow{+}...
	\]
	We know that $\Mod_K$ (resp. $\Map_K$) has cohomological dimension $0$ (resp. $1$) (see \cite[\S~2.1]{Maz:11a}). It follows, by successively using the previous propositions, that for $i\ge 2$
	\[
		\Ext_{\calM_{1,\QQ}^{a}}^i(M,N)\cong \Ext_{\calM_{1,\QQ}}^i(M_\et,N_\et)
	\]
	and the latter  is zero by \cite[Proposition~3.2.4]{Org:04a}.
	
	Thus the cohomological dimension is at most $1$, and we know that there are non trivial extensions. For instance $\Ext_{\calM_{1,\QQ}^{a}}^1(\ZZ[1],\GG_m)=K^*\otimes_\ZZ \QQ$.
\end{proof}

\section{Adding the Hodge filtration}
\subsection{$p$-adic Hodge theory for 1-motives} It is known \cite[\S~6.3.3]{Fon:94c} that, given a Deligne $1$-motive $M$ over a $p$-adic field $K$, its de Rham realisation $T_\dR(M)$ is naturally endowed with an  admissible filtered $(\phi,N)$-module structure. In the following we are only interested in the case  $K$ is the fraction field of $W(k)$ (for $k$ finite of characteristic $p$) and $M$ has good reduction.  
\begin{prp}
	The de Rham realisation induces a  functor (by abuse of notation we use again $T_\dR$)
	\[
		T_\dR:\calM_1^{\rm good}\to \MF^{\ad}_K \ ,
	\]
	where $\calM_1^{\rm good}$ is the category of Deligne 1-motives, over $K=W(k)[1/p]$, having good reduction.
\end{prp}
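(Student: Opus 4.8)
The plan is to read the statement through the result of Fontaine quoted above. By \cite[\S~6.3.3]{Fon:94c} the de Rham realization already carries, functorially in $M$, the structure of an admissible filtered $(\phi,N)$-module, via a canonical identification $T_\dR(M)\cong\D_{\mathrm{st}}(V_p(M))$ with the Dieudonné module of the $p$-adic realization $V_p(M)=T_p(M)\otimes_{\ZZ_p}\QQp$; and since $K=W(k)[1/p]$ is absolutely unramified, an object of $\MF_K$ is simply a triple $(D,\phi,\Fil^\bullet D)$. Thus the only point left to prove is that under good reduction the monodromy operator $N$ vanishes, equivalently that $V_p(M)$ is a \emph{crystalline} $\Gal(\bar K/K)$-representation: then $\D_{\mathrm{st}}(V_p(M))=\D_{\crys}(V_p(M))$, which is weakly admissible by Colmez--Fontaine and hence lies in $\MF_K^{\ad}$.

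To prove crystallinity I would pass to the $p$-divisible group of $M$. Attached to $M=[u:F\to G]$ there is a $p$-divisible group $M[p^\infty]$ (the colimit of the finite group schemes $M[p^n]$), sitting in $0\to G[p^\infty]\to M[p^\infty]\to F\otimes_\ZZ(\QQp/\ZZ_p)\to 0$ and satisfying $V_p(M)=V_p(M[p^\infty])$. Good reduction of $M$ means that $M$ extends to a $1$-motive $\mathcal M=[\mathcal F\to\mathcal G]$ over $W(k)$ with $\mathcal G$ semi-abelian and with abelian part an abelian scheme over $W(k)$; hence $M[p^\infty]$ extends to a $p$-divisible group $\mathcal M[p^\infty]$ over $W(k)$. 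The rational Tate module of a $p$-divisible group over $W(k)$ is crystalline, so $V_p(M)$ is crystalline, which finishes the argument. Functoriality of $T_\dR:\calM_1^{\rm good}\to\MF_K^{\ad}$ is inherited from Fontaine's construction together with the functoriality of $M\mapsto M[p^\infty]$ and of the extension to $W(k)$.

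If one wishes to avoid invoking \cite[\S~6.3.3]{Fon:94c} wholesale, the same package can be assembled directly: the special fibre $\mathcal M[p^\infty]\otimes_{W(k)}k$ has a covariant Dieudonné module $D_0$, a finite free $W(k)$-module with a $\sigma$-semilinear $\phi$; the crystalline--de Rham comparison for $p$-divisible groups gives $D_0\otimes_{W(k)}K\cong\D_{\dR}(V_p(M))$ compatibly with filtrations, and the de Rham comparison for $1$-motives identifies this with $T_\dR(M)$ as a filtered $K$-vector space, which transports $\phi$ to $T_\dR(M)$ (bijective after inverting $p$, with $N=0$ because the reduction is good and not merely semistable). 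Admissibility can then be read off the weight filtration $0\subseteq T\subseteq G\subseteq M$: the graded pieces are a lattice $\ZZ^r$ (unit-root, Hodge slope $=$ Newton slope $=0$), an abelian variety with good reduction (classical), and a torus (a Tate-type object), each lying in $\MF_K^{\ad}$; since weakly admissible modules form an abelian category closed under extensions, and the extensions arising in a $1$-motive with good reduction are crystalline, $T_\dR(M)$ is admissible as well.

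The main obstacle is the identification of $T_\dR(M)$ -- which is built from the \emph{universal vector extension} $G^\natural$ of $M$ and therefore has ``vector'' directions not visible in $G$ -- with $\D_{\mathrm{st}}(V_p(M))$, i.e. the compatibility of the Hodge filtration of the $1$-motive with the de Rham filtration of its $p$-adic realization. This is precisely the content of \cite[\S~6.3.3]{Fon:94c} and of the $p$-adic comparison for $1$-motives; once it is granted, the crystallinity argument above and the passage from the $(\phi,N)$-setting to the $\phi$-setting are routine, as is the verification that $T_\dR$ is functorial into $\MF_K^{\ad}$.
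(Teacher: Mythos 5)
Your argument is correct in substance, but it takes a genuinely different route from the paper. You work on the Galois side: you invoke Fontaine's comparison for $1$-motives \cite[\S~6.3.3]{Fon:94c} to identify $T_\dR(M)$ with $\D_{\rm st}(V_p(M))$ as a filtered $(\phi,N)$-module, and reduce everything to showing $N=0$, which you get from crystallinity of $V_p(M)$ via the $p$-divisible group $M[p^\infty]$ extending over $W(k)$ under good reduction. The paper never touches $V_p(M)$: it uses the crystalline realisation of Andreatta--Barbieri-Viale \cite{AndBar:05a}, giving a canonical isomorphism $T_\dR(M)\cong T_\crys(M_k)\otimes K$ which equips $T_\dR(M)$ with its Frobenius directly, and then proves admissibility by devissage along the weight filtration (admissibility is stable under extensions of filtered $\phi$-modules, and the graded pieces --- torus, abelian variety with good reduction, lattice/Cartier dual --- are admissible by \cite[proof of Lemma 3.2.2]{AndBarBer:17a} and \cite{Col:92a}). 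Your second, ``direct assembly'' paragraph is essentially this devissage, so the two proofs converge there. What your route buys is that admissibility is immediate once crystallinity of $V_p(M)$ is known, since $\MF^{\ad}_K$ is by definition the essential image of the crystalline functor; what the paper's route buys is independence from the $p$-adic comparison theorem for $1$-motives (including the delicate matching of the Hodge filtration built from the universal vector extension, which you rightly flag as the main obstacle), needing only the crystalline--de Rham comparison of \cite{AndBar:05a}. Two small corrections: Colmez--Fontaine is the implication ``weakly admissible $\Rightarrow$ admissible'', which is not what you need --- either quote that $\D_{\rm st}$ of a semistable representation is weakly admissible (Fontaine), or simply note that crystallinity of $V_p(M)$ puts $\D_\crys(V_p(M))$ in $\MF^{\ad}_K$ by definition; and in your devissage the clause ``the extensions arising in a $1$-motive with good reduction are crystalline'' is superfluous, since closure of (weak) admissibility under extensions of filtered $\phi$-modules already suffices, exactly as in the paper's footnote.
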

\begin{proof}
	We assume that $M$ is the generic fibre of  a \emph{lisse} 1-motive over the dvr $\calO_K$ (i.e. $M$ is of good reduction). We denote by $M_{k}$ its special fibre.  Then by \cite{AndBar:05a} there is a canonical isomorphism
	\[
		T_\dR(M)\cong T_{\crys}(M_{k})\otimes K
	\]
	thus $T_{\dR}(M)$ carries a   Frobenius $\phi$ (see \cite[\S~3.2.1]{AndBarBer:17a}, they note it $F_v$)  and a (1-step) filtration, namely
	\[
	 \Fil^iT_\dR(M)= \begin{cases}
	 	0 &i\ge 0\\
		\ker(T_\dR(M)\to \Lie(G))& i=0\\
		T_\dR(M)& i\le -1
	 \end{cases}  \quad .
	\]
	By devissage w.r.t. to the weight filtration\footnote{admissibility is a property closed under extensions and $T_\dR$ of an exact sequence of $1$-motives gives and exact sequence of filtered vector spaces.} we can easily prove that $T_\dR(M)$ is admissible, since $T_\dR(-)$ of an abelian variety (with good reduction), of a torus (of constant rank over $\calO_K$) and of its Cartier dual, are all admissible (by \cite[Proof of Lemma 3.2.2]{AndBarBer:17a} and \cite{Col:92a}). 
\end{proof}

\subsection{Fontaine--Ogus modules}\label{ssec:MFOg} Let now $K$ be a number field and $M$ be a $1$-motive over $K$. We know that  for some $n>>0$, $M$  can be considered as a lisse 1-motive over $\calO_K[1/n]$ \cite[Lemma~3.1.2]{AndBarBer:17a}. Then, for  all finite and unramified  places $v\nmid n$, $T_{\dR}(M_{K_v})$ can be consider as an object of $\MF^{\ad}_{K_v}$ (by the previous section). 

This motivates the following definition
\begin{dfn}
	Let $\MFOg(K)$ be the category whose objects are systems $(T, \Fil^\bullet)$ such that
	\begin{itemize}
		\item $T=(T_\dR,(T_v)_v)\in \FOg(K)$. We denote by $\phi_v$ the Frobenius on $T_v$
		\item $\Fil^\bullet$ is a (decreasing, exhaustive) filtration on $T_\dR$ (called Hodge filtration).
		\item for almost all $v$, $(T_v,\Fil^\bullet_v=\Fil^\bullet\otimes K_v,\phi_v)$ is an admissible filtered  $\phi$-module over $K_v$.
	\end{itemize}
Morphisms of $\MFOg(K)$ are morphism of $\FOg(K)$ compatible with respect to the ``Hodge'' filtration.
\end{dfn}
\begin{prp}
	The category $\MFOg(K)$ is abelian.
\end{prp}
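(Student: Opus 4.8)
The plan is to reduce the claim to the already–known fact that the category of filtered Ogus structures $\FOg(K)$ is abelian (from \cite{AndBarBer:17a}) together with the standard fact that weakly admissible filtered $\phi$-modules form an abelian category (Fontaine, \cite{Fon:94c}), and then to check that the extra structure (the Hodge filtration on $T_\dR$ and the admissibility condition at almost all $v$) is preserved under forming kernels, cokernels, images and coimages in $\FOg(K)$. First I would recall the construction of kernels and cokernels in $\FOg(K)$: given a morphism $\xi\colon (T,\Fil^\bu)\to (T',\Fil'^\bu)$ in $\MFOg(K)$, one forms $\ker(\xi)$ and $\coker(\xi)$ in $\FOg(K)$ componentwise (on $T_\dR$ and on each $T_v$, compatibly with the comparison data and with the Frobenii $\phi_v$), and one equips the de Rham component with the induced filtration — the subspace filtration on a kernel, the quotient filtration on a cokernel. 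Since $\Fil^\bu$ is decreasing and exhaustive, these induced filtrations are again decreasing and exhaustive, and a morphism of $\FOg(K)$ that is filtered in the source produces filtered maps on these (co)kernels by the universal properties; so $\MFOg(K)$ has kernels and cokernels and the forgetful functor to $\FOg(K)$ creates them.

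The second point is the admissibility condition in the definition of $\MFOg(K)$, namely that for almost all $v$ the triple $(T_v,\Fil^\bu_v,\phi_v)$ is an admissible filtered $\phi$-module over $K_v$. Here I would argue as follows: if $\xi$ is a morphism in $\MFOg(K)$, then for all but finitely many $v$ (a finite set that excludes the finitely many bad $v$ for $(T,\Fil)$ and for $(T',\Fil')$), both $(T_v,\Fil^\bu_v,\phi_v)$ and $(T'_v,\Fil'^\bu_v,\phi'_v)$ lie in $\MF^{\ad}_{K_v}$. Since $\MF^{\ad}_{K_v}$ is an abelian subcategory of the category of all filtered $\phi$-modules over $K_v$ — in particular closed under kernels and cokernels of morphisms between its objects — the $v$-components of $\ker(\xi)$ and $\coker(\xi)$ computed in $\FOg(K)$, with their induced filtrations, are again admissible filtered $\phi$-modules for all such $v$. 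Thus $\ker(\xi)$ and $\coker(\xi)$, as objects of $\MFOg(K)$, still satisfy the ``almost all $v$'' admissibility requirement: only finitely many bad places are introduced.

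It then remains to verify the abelian axiom: for every $\xi$ the canonical map $\coim(\xi)\to\img(\xi)$ is an isomorphism in $\MFOg(K)$. Because the forgetful functor $\MFOg(K)\to\FOg(K)$ is faithful, creates kernels and cokernels, and hence also creates images and coimages, this canonical map is sent to the corresponding canonical map in $\FOg(K)$, which is an isomorphism since $\FOg(K)$ is abelian. A morphism in $\MFOg(K)$ whose underlying morphism in $\FOg(K)$ is an isomorphism is itself an isomorphism precisely when the inverse is filtered; but the inverse of a filtered isomorphism whose components are strict — which it is here, since $\coim\to\img$ in $\FOg(K)$ is an isomorphism of filtered vector spaces on the de Rham side — is automatically filtered. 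Hence $\coim(\xi)\isomto\img(\xi)$ in $\MFOg(K)$, and $\MFOg(K)$ is abelian. The one point that needs genuine care, and which I expect to be the main (though mild) obstacle, is exactly this strictness of the de Rham filtration for the map $\coim\to\img$: one must make sure that the induced quotient filtration on the coimage and the induced subspace filtration on the image agree under the canonical map, which is where one uses that $\FOg(K)$ is abelian with its filtration handled strictly, rather than that argument failing as it would for a non-strict filtered category. Everything else is a routine check that the relevant constructions are componentwise and that ``finite exceptional set'' is stable under finite unions.
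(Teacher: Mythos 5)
Your overall plan (form kernels and cokernels componentwise over $\FOg(K)$ with the induced Hodge filtrations, check that admissibility at almost all $v$ survives, then verify $\coim\cong\img$) is essentially the paper's, and your first two paragraphs are fine. The genuine problem sits at exactly the step you yourself flag as the delicate one: you justify strictness of the Hodge filtration for $\coim(\xi)\to\img(\xi)$ by appealing to ``$\FOg(K)$ is abelian with its filtration handled strictly.'' But the filtration that $\FOg(K)$ records is the \emph{weight} filtration; the Hodge filtration $\Fil^\bullet$ is precisely the extra datum added when passing to $\MFOg(K)$, so the fact that the underlying map is an isomorphism in $\FOg(K)$ gives no control on the quotient filtration of the coimage versus the subspace filtration of the image. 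Indeed, for a bare ``Ogus structure plus a compatible filtration,'' with no admissibility condition, the statement is false: the identity map from $(T,\Fil^\bullet)$ to $(T,\Fil'^\bullet)$ with $\Fil^\bullet\subsetneq\Fil'^\bullet$ has zero kernel and cokernel, yet $\coim\to\img$ is not a filtered isomorphism. So the argument as written would prove an untrue statement, which is the sign of a real gap rather than a mere imprecision.

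The correct source of the strictness --- the one the paper invokes --- is Fontaine's theorem that morphisms in $\MF^{\ad}_{K_v}$ are strictly compatible with the filtration; the admissibility clause in the definition of $\MFOg(K)$ is there exactly to make this available (it also kills the counterexample above, since two distinct admissible filtrations with the same Frobenius would have different Hodge numbers but the same Newton number). Your second paragraph nearly supplies this: pick a single place $v$ that is good for both objects; then $T_v\to T'_v$ is a morphism of admissible filtered $\phi$-modules, hence strict, so $\coim\to\img$ is a filtered isomorphism over $K_v$; since $\Fil^\bullet_v=\Fil^\bullet\otimes K_v$ and the induced filtrations on kernel, image and coimage commute with the flat base change $K\to K_v$, comparing dimensions of the filtration steps descends this to a filtered isomorphism over $K$. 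With that substitution your proof becomes correct and coincides with the paper's.
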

\begin{proof}
	It is clear how to define kernels and cokernels. We already know that $\FOg$ is abelian, hence morphisms are strict with respect to the weight filtration. We have to prove that morphisms are strictly compatible with respect to the Hodge filtration. This follows form the fact that morphisms are strict in  $\MF_{K_v}^{\ad}$.
\end{proof}
\begin{prp}\label{prp:tmfog}
	The filtered Ogus realisation $T_{\FOg}$ factors through $\MFOg$ via
	\[
		T_{\MFOg}:\calM_{1}(K)\rightarrow \MFOg(K) \ ,
	\]
	induced by 
	\[
		T_{\MFOg}(M)=(T_\FOg(M), \Fil^\bullet T_\dR(M))\ .
	\]
	
	Moreover $T_{\MFOg}$ is fully faithful.
\end{prp}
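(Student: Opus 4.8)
The plan is to reduce everything to the two inputs we already have: the fact (stated in the introduction) that $T_{\FOg}\colon\calM_{1,\QQ}(K)\to\FOg(K)$ is fully faithful, and the fact (the previous proposition) that the de Rham realisation lands in $\MF^{\ad}_{K_v}$ for almost all $v$. First I would check that $T_{\MFOg}$ is well defined, i.e. that the system $(T_\FOg(M),\Fil^\bullet T_\dR(M))$ really is an object of $\MFOg(K)$: the only non-formal point is the admissibility of $(T_v,\Fil^\bullet_v,\phi_v)$ for almost all $v$, and this is exactly the content of the preceding proposition once one spreads $M$ out to a lisse $1$-motive over $\calO_K[1/n]$ (by \cite[Lemma~3.1.2]{AndBarBer:17a}) so that $M_{K_v}$ has good reduction at all $v\nmid n$.

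For functoriality, I would note that $\Fil^\bullet T_\dR(M)=\ker\bigl(T_\dR(M)\to\Lie G\bigr)$ in degree $0$ (the one-step filtration described in the previous proof), and a morphism of $1$-motives $M\to M'$ induces a compatible map $G\to G'$, hence a map $\Lie G\to\Lie G'$ over which the de Rham maps commute; therefore the induced map $T_\dR(M)\to T_\dR(M')$ carries $\Fil^\bullet T_\dR(M)$ into $\Fil^\bullet T_\dR(M')$. This gives the factorisation $T_{\MFOg}$ through the forgetful functor $\MFOg(K)\to\FOg(K)$.

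Faithfulness of $T_{\MFOg}$ is immediate, since it sits over the faithful functor $T_{\FOg}$. Fullness is the real point. Let $M,M'\in\calM_{1,\QQ}(K)$ and let $\psi\colon T_{\MFOg}(M)\to T_{\MFOg}(M')$ be a morphism in $\MFOg(K)$. Forgetting the Hodge filtration, $\psi$ is a morphism $T_\FOg(M)\to T_\FOg(M')$, so by full faithfulness of $T_{\FOg}$ there is a unique $f\colon M\to M'$ in $\calM_{1,\QQ}(K)$ with $T_{\FOg}(f)=\psi$. It remains to observe that $f$ automatically lies in $\MFOg(K)$ under $T_{\MFOg}$, i.e. that $T_\dR(f)$ respects the Hodge filtrations — but $T_{\MFOg}(f)$ and $\psi$ have the same underlying $\FOg$-morphism, namely $\psi$ itself, and $\psi$ is by hypothesis a morphism in $\MFOg(K)$, so it is filtration-compatible by definition. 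Hence $T_{\MFOg}(f)=\psi$ in $\MFOg(K)$, proving fullness. (Equivalently: $\MFOg(K)\to\FOg(K)$ is faithful and $T_{\FOg}$ is full, so any lift of an $\FOg$-morphism, once it exists in $\MFOg$, is unique; and it exists because $\psi$ was given as an $\MFOg$-morphism.)

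The only genuine obstacle I anticipate is making the admissibility claim uniform in $v$ — that is, confirming that a single spreading-out of $M$ over $\calO_K[1/n]$ simultaneously forces good reduction, unramifiedness, and hence admissibility at all $v\nmid n$; but this is precisely packaged by the previous proposition together with \cite[Lemma~3.1.2]{AndBarBer:17a}, so in the write-up it should amount to a short citation rather than new work. Everything else is formal bookkeeping over the already-established full faithfulness of $T_{\FOg}$.
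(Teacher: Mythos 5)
Your proposal is correct and follows essentially the same route as the paper's (much terser) proof: well-definedness comes from the preceding proposition on admissibility, functoriality from the fact that the de Rham realisation respects the Hodge filtration, and full faithfulness from the full faithfulness of $T_{\FOg}$ combined with the observation that the forgetful functor $\MFOg(K)\to\FOg(K)$ is faithful, so a morphism in $\MFOg(K)$ is just a filtration-compatible $\FOg$-morphism and the lift supplied by $T_{\FOg}$ is automatically the required one. Your write-up merely makes explicit the spreading-out and the lifting argument that the paper leaves implicit.
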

\begin{proof}
	The de Rham realisation respects the Hodge filtration. To get the full faithfulness we just need to note that the forgetful functor
	\[
		\MFOg(K)\rightarrow \FOg(K)\ , (T,\Fil^\bullet)\mapsto T\ ,
	\]
	is faithful.
\end{proof}
\begin{rmk}
	In \cite{ChiLazMaz:19a} it is proven that the filtered Ogus realisation $T_{\FOg}$ extends to the category of Voevodsky motives. Also the latter functor $T_{\MFOg}$ can be extended to Voevodsky motives. In fact it is straightforward to add the Hodge filtration and it is possible to prove by devissage the required admissibility condition. 
\end{rmk}
\section{Extending the realisation to Laumon 1-motives}

Let us denote simply by $T:\calM_1\to \MFOg$ the realisation functor defined in the previous section. We aim to extend this functor to the category $\calM_1^a$ of Laumon 1-motives. For this reason we have to introduce another category $\MFOg^a_{(1)}$ containing $\MFOg_{(1)}$ as a full subcategory and such that there exists a functor $T^a:\calM_1^a\to \MFOg^a_{(1)}$ extending $T$.
\subsection{The target category} Recall that $\FOg_{(1)}$ is the category of filtered Ogus structure of level $\le 1$ \cite[Definition~1.4.4]{AndBarBer:17a}. Then we can define $\MFOg_{(1)}$ to be the subcategory of $\MFOg$ given by $(T,\Fil^\bullet)$ such that $T$ is of level $\le 1$, $\Fil^{1}=0$ and $\Fil^{-1}=T$.
\begin{dfn}
	Let $\MFOg_{(1)}^a$ be the category of systems $(T,\Fil^\bullet, \alpha:A_0\to A_1, \beta:B_0\to B_1,\delta,\gamma)$ where
	\begin{itemize}
		\item $(T,\Fil^\bullet)$ is in $\MFOg_{(1)}$.
		\item $\alpha,\beta$ are objects of $\Map_K$.
		\item   $\delta:B_1\cong T/\Fil^0$ is an isomorphism.
		\item $\Fil^0\subset A_0$ and $\gamma:A_0\to T$ is a $K$-linear map.
		\item The following diagram is cartesian
			\[
			\xymatrix{
		 T\ar[r]& T/\Fil^0\\
	  A_0\ar[u]^\gamma \ar[r] & B_0\ar[u]^{\delta\beta}
		}
			  \]
	\end{itemize}
	
	By abuse of notation we simply write $(T,F^\bullet, \alpha, \beta)$ to denote such an object.
	
		Morphisms are compatible systems of maps.
\end{dfn} 
\begin{rmk}
	  Note that $\MFOg_{(1)}$ is a full subcategory of $\MFOg_{(1)}^a$ via
	\[
		(T,\Fil^\bullet)\mapsto (T,\Fil^\bullet,0,0)\  ,
	\] 
	where the first zero map is $\Fil^0\to 0$, while the second is $0\to T/\Fil^0$.
\end{rmk}
\begin{rmk}\label{rmk:gamma}
Given a Laumon 1-motive $M$ we have the following splitting
\[
	\xymatrix{
	0\ar[r]& \ell(M_\et) \ar[r]& \ell(M_\times^\natural)\ar[r]^{\rm pr}& \ar@{.>}@/^1pc/[l]^{du_\times^\natural}\Lie(F)\ar[r]&0\ .
	}
\]
Hence $x\mapsto x-(du_\times^\natural\circ {\rm pr})(x)$ gives a map $\ell(M_\times^\natural)\to \ell(M_\et)$. We denote by $\gamma_M$ its restriction to $U(M)\subset\ell(M_\times^\natural)$.

Thus we can consider the  object of $\MFOg_{(1)}^a$, naturally associated to $M$, represented by the following diagram
	\[
	\xymatrix{
0\ar[r]& U(M_\et)\ar@{=}[d] \ar[r]& T_\dR(M_\et)\ar[r]^\pi& \Lie(G_\times)\ar[r]&0\\
0\ar[r]	  & U(M_\et) \ar[r] & U(M) \ar[d]^{\alpha_M}\ar[u]^\gamma\ar[r] & \Lie(F)\ar[u]^{du_\times}\ar[r]&0\\
& & V &
}
	  \]
	  (See \cite[\S~3.2]{BarBer:09a})
	  \end{rmk}
\begin{thr}
	Let $M=[u:F\to G]$ be a Laumon $1$-motive over $K$. Let
	\[
		T^a(M)=(T_{\MFOg}(M_\et),\alpha_M, du_\times,\gamma_M)
	\]
	be the object of $\MFOg_{(1)}^a$ represented by the above diagram. Then $T^a$ induces a  fully faithful functor
	\[
		T^a:\calM_{1,\QQ}^a\to \MFOg^a_{(1)}
	\]
	extending $T$.
\end{thr}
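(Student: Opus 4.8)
The plan is to run the argument through the dévissage of Propositions~\ref{prp:??} and~\ref{prp:times}: together these identify $\calM_{1,\QQ}^a$ with the iterated fibre product $\calM_{1,\QQ}\times_{\Mod_K}\Map_K\times_{\Mod_K}\Map_K$ (the integral equivalences descend to the isogeny categories because $\Mod_K$ and $\Map_K$ are already $\QQ$-linear and $-\otimes\QQ$ is exact, so it commutes with the kernels cutting out $\Hom$-groups in a fibre product). Thus a morphism $\xi\colon M\to M'$ in $\calM_{1,\QQ}^a$ is the same as a compatible triple: a morphism $\psi\colon M_\et\to M'_\et$ of Deligne $1$-motives, a map $\Lie F\to\Lie F'$ on Lie algebras of the formal parts, and a map $v\colon V\to V'$ on vector parts — the remaining data (the morphism $g_\times$ on semiabelian parts, and the induced map $U(\psi_\times)\colon U(M)\to U(M')$) being determined. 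I would match this shape on the target side.

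I would first check that $T^a$ is well defined and functorial and that it extends $T$. Since $M_\et$ is a Deligne $1$-motive, $T_{\MFOg}(M_\et)$ lies in $\MFOg_{(1)}$ (its Ogus structure has level $\le 1$ and its Hodge filtration is the one-step filtration, $\Fil^1=0$, $\Fil^{-1}=T$); the isomorphism $\delta\colon\Lie(G_\times)\isomto T_\dR(M_\et)/\Fil^0$ is the canonical one and $\gamma_M$ is the map of Remark~\ref{rmk:gamma}. The one substantial point is that the relevant square, with vertices $U(M_\et)\subset U(M)$, $T_\dR(M_\et)$, $\Lie F$, $\Lie(G_\times)$, is cartesian: this is exactly Remark~\ref{rmk:gamma}, i.e.\ \cite[\S~3.2]{BarBer:09a}, the two rows of the diagram there being short exact with common kernel $U(M_\et)$. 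Functoriality follows from functoriality of $T_\dR$, of the universal vector extension, and of the (canonical, hence natural) section $\gamma_M$. That $T^a$ extends $T$ is immediate, since a Deligne $1$-motive has $F^\circ=0$ and $V=0$, so $M=M_\et$ and $T^a(M)$ is the image of $T_{\MFOg}(M)=T(M)$ under the full embedding $\MFOg_{(1)}\hookrightarrow\MFOg^a_{(1)}$. Faithfulness is equally easy: if $T^a(\xi)=0$ then its $\MFOg$-component $T_{\MFOg}(\psi)$ vanishes, whence $\psi=0$ by full faithfulness of $T_{\MFOg}$, and its $B_0$- and $A_1$-components $\Lie F\to\Lie F'$ and $v$ vanish; by the dévissage these three data determine $\xi$, so $\xi=0$.

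For fullness, let $\Xi\colon T^a(M)\to T^a(M')$ have components $\Phi$ (in $\MFOg_{(1)}$), $(a,\bar a)$ on the $\alpha$-factor and $(b,\bar b)$ on the $\beta$-factor; I would recover $\xi$ in three steps. (i) By full faithfulness of $T_{\MFOg}$ there is a unique $\psi\colon M_\et\to M'_\et$ with $T_{\MFOg}(\psi)=\Phi$; compatibility of $\Xi$ with $\delta$ forces $\bar b$ to be the map induced by $\Phi$ on $T/\Fil^0$, which under the canonical identifications is $\ell(\psi)$, so $(\psi,(b,\bar b))$ is a morphism in $\calM_1\times_{\Mod_K}\Map_K$ and, by Proposition~\ref{prp:times}, comes from a morphism $\psi_\times\colon M_\times\to M'_\times$, which induces $U(\psi_\times)\colon U(M)\to U(M')$. (ii) Since the square defining $\MFOg^a_{(1)}$ is cartesian, $U(M')=T_\dR(M'_\et)\times_{\Lie(G'_\times)}\Lie F'$, so $a$ is pinned down by the two conditions $\gamma_{M'}\circ a=\Phi\circ\gamma_M$ (compatibility of $\Xi$ with $\gamma$) and compatibility of $a$ with the structural projections onto $\Lie F$ and $\Lie F'$ through $b$ (compatibility of $\Xi$ with $A_0\to B_0$). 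By naturality of $\gamma_M$ and of the sequence $0\to U(M_\et)\to U(M)\to\Lie F\to 0$, these same conditions hold for $U(\psi_\times)$; hence $a=U(\psi_\times)$. (iii) Therefore $(\psi_\times,(U(\psi_\times),v))$ is a morphism in $\calM_1^\times\times_{\Mod_K}\Map_K$, and Proposition~\ref{prp:??} produces $\xi\colon M\to M'$; a componentwise check gives $T^a(\xi)=\Xi$.

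The main obstacle is step (ii): showing that the ``vector extension part'' $a$ of an abstract morphism is \emph{forced} to equal $U(\psi_\times)$. This is exactly why the definition of $\MFOg^a_{(1)}$ builds in the section $\gamma_M$ and the cartesian square — without them $a$ cannot be reconstructed from $\Phi$ and $b$ alone, and $T^a$ would fail to be full. So the heart of the argument is verifying that $\gamma_M$ is natural in $M$ and that, together with the cartesian property, the compatibilities of $\Xi$ with $\gamma$ and with $A_0\to B_0$ determine $a$ uniquely.
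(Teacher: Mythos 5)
Your proposal is correct and takes essentially the same route as the paper: fullness is deduced from the full faithfulness of $T_{\MFOg}$ together with the d\'evissage of \S~\ref{ssec:devi} (Propositions~\ref{prp:times} and~\ref{prp:??}), with the cartesian square and the section $\gamma_M$ forcing the map on universal vector extensions. Your step (ii) merely spells out in detail what the paper compresses into ``$a$ is completely determined by $b$ and $\eta$''.
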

\begin{proof}
	The non obvious part is the fullness of the functor. This follows from the fullness of $T_{\MFOg}$ and the equivalences of categories in \S~\ref{ssec:devi}. More precisely, let $M,M'$ be two Laumon 1-motives. Then a morphism $\psi\in \Hom_{\MFOg_1^a}(T^a(M),T^a(M'))$ is given by
	\begin{itemize}
		\item a morphism $\eta:T(M_\et)\to T(M_\et')$ in $\MFOg$;
		\item two $K$-linear maps $a:U(M)\to U(M')$, $v:V\to V'$ such that $v\circ\alpha_M=\alpha_{M'}\circ a$;
		\item two $K$-linear maps $b:\Lie F\to \Lie F'$, $c:\Lie(G_\times)\to \Lie (G_\times ')$ such that $c \circ d u_\times=d u_\times'\circ  b$
	\end{itemize}
	satisfying the obvious compatibility conditions. 
	
	By Proposition~\ref{prp:tmfog}  there exists $\xi_\et=(f_\et,g_\times):M_\et\to M_\et'$  (morphism of Deligne 1-motives up to isogeny) such that $\eta=T(\xi_\et)$. Note that by compatibility $dg_\times=c$ and by Proposition~\ref{prp:times} the data of $\xi, b,c$ uniquely determines a morphism $\xi_\times:M_\times\to M_\times'$. To conclude we use Proposition~\ref{prp:??} since $a$ is completely determined by $b$ and $\eta$.
\end{proof}

\begin{rmk}(Comparison with sharp de Rham)
	Consider the following functor
	\[
		S:\MFOg_{(1)}^a\rightarrow \Mod_K\ ,\  (T,\Fil, \alpha,\beta)\mapsto (T\times^{\Fil^0} A_0) \times A_1
	\]
	where $T\times^{\Fil^0} A_0$ is the push-out. Then it is easy to check that $T_\sharp(M)= S( T^a(M))$, where $T_\sharp:\calM_{1,\QQ}^a\to \Mod_K$ is the sharp de Rham realisation \cite[\S~3.2]{BarBer:09a}.
\end{rmk}

\end{document}